%
%
%


\documentclass{amsart}





\usepackage{xcolor}
\usepackage{graphicx}            
\usepackage{epstopdf}     

\newtheorem{theorem}{Theorem}[section]

\theoremstyle{definition}
\newtheorem{definition}[theorem]{Definition}
\newtheorem{example}[theorem]{Example}

\newtheorem{prop}[theorem]{Proposition}
\newtheorem{lem}[theorem]{Lemma}
\newtheorem{cor}[theorem]{Corollary}

\theoremstyle{remark}
\newtheorem{remark}[theorem]{Remark}
\def\dd{{\rm d}}

\numberwithin{equation}{section}

\begin{document}

\title{Information Measures for Entropy and Symmetry }


\author{Daniel Lazarev}
\address{Department of Mathematics, Massachusetts Institute of Technology, Cambridge, MA 02139}
\curraddr{}
\email{dlazarev@mit.edu}
\thanks{}

\subjclass[2020]{Primary 28D20. Secondary 28C10, 94A17}

\date{}

\dedicatory{}

\commby{}

\begin{abstract}
Entropy and information can be considered dual: entropy is a measure of the subspace defined by the information  constraining the given ambient space. 
Negative entropies, arising in naïve extensions of the definition of entropy from discrete to continuous settings, are byproducts of the use of probabilities, which only work in the discrete case by a fortunate coincidence. 
We introduce the notions of sup-normalization and information measures, which allow for the appropriate generalization of the definition of entropy that keeps with the interpretation of entropy as a subspace volume. 
 Applying this in the context of topological groups and Haar measures, we elucidate the relationship between entropy, symmetry, and uniformity. 
\end{abstract}

\maketitle


\section{Introduction}

Entropy is popularly construed as a measure of disorder. 
Here we show that it can also be seen as a measure of uniformity and symmetry.
As shown explicitly below, when unconstrained, entropy maximization yields a uniform, equal-weight, or translation invariant distribution. 
Meanwhile, information lowers entropy, imposing certain nonuniformities. 
Symmetries, if viewed as uniformities across certain transformations or translations, would then naturally correspond to states of higher entropy. 
Thus, the ubiquity of symmetries in physics may not point to a delicate and precarious balance mysteriously maintained by Nature, but to a state of maximal entropy given the information imposed by the external forces or constraints.
When stricter constraints are imposed on a system, certain symmetries naturally break, but the system tends to the most ``uniform" or ``symmetric" distribution while satisfying those new constraints.
Entropy, symmetry, and information are ubiquitous precisely because they are fundamental.
Even in a purely mathematical sense, the symmetries present in a certain theory can be viewed as dual to the axiomatic constraints defining that theory.
Thus, while not the subject of this work, defining an entropy to compare the symmetries of different theories may be a way to classify those theories, similar to the Erlangen program in geometry \cite{Kle1893,KleHas1893,EilMac45}. 


Contradictory heuristics and claims have appeared in the scientific literature regarding the relationship between entropy and symmetry. For example, while \cite{Lin96a} asserts that entropy is (positively) correlated with measures of symmetry and similarity, others claim that symmetry lowers entropy \cite{Bor20}.

These views can be reconciled in light of the duality of entropy and information. 
If the symmetry is in the information—for example, by imposing that certain symmetry-obeying states be equal—then this certainly lowers entropy since it constrains the space to a particular subspace exhibiting that particular symmetry. However, having more symmetries in the state space, in the sense of giving equal weight to more points in the space, {\it uniformizes} the space, increasing its entropy. 


Put differently, one needs less information to encode a system that has many symmetries than one with little or no symmetries, and less information means more entropy, but  imposing symmetric information (restricting to particular symmetric states) lowers entropy, as would any information.

In the mathematical literature, the Haar measure has already been shown to yield the unique maximal entropy for groups using the  measure theoretic definition of entropy in dynamical systems theory \cite{AdlWei67,Ber69}. 
Moreover, for an essential spanning forest of an infinite graph $G$, it was shown in \cite{She06} that there exists a unique $\textrm{Aut}(G)$-invariant measure maximizing the specific entropy. 
Proper attention to the constraints and symmetries on the space of solutions to given PDEs also allows maximum entropy to be used as a selection criterion for underspecified PDEs, such as the Navier-Stokes equation \cite{GliLazChe20}.

\section{Entropy  as a Subspace Volume}
Shannon's information theoretic entropy is given by 
\begin{equation*}
S_{\rm disc}({\bf p}) = - \sum_{i=1}^n  p_i \log p_i  , 
\end{equation*}
where  ${\bf p} = \{ p_i \}_{i=1}^n$ is a discrete probability distribution \cite{Sha48, CovTho91}.
Shannon extended this to continuous random variables by a similar formula, which matches the thermodynamic entropy defined by Boltzmann and Gibbs. This is given by,
\begin{equation*}
S_{\rm cont} (p(x),{\mathcal{V}})
= - \int_{{\mathcal{V}}} p(x)  \log p(x)  \;\dd x  ,
\end{equation*}
where $p(x)$ is a probability density function on a set $\mathcal{V}$  \cite{Whe91,CovTho91,GavCheBec17}.
A generalization of these definitions is the Baron-Jauch, or relative entropy \cite{BarJau72,Ska75,Whe91}, 
which is defined measure theoretically and works for both continuous and discrete spaces.

\begin{definition}
 The \textit{relative entropy}
of a  probability  measure $\mu$  relative to a measure $\nu$
on a measure space $\mathcal{X}$ (where $\mu$ is 
absolutely continuous with respect to $\nu$, {\it i.e.}, $\mu \ll \nu$) is given by,
\begin{equation}
\label{eq:rel-def}
S_\nu(\mu,\mathcal{X})
= -\int_{\mathcal{X}}\left(\dfrac{\dd\mu}{\dd \nu} \right)  \log \left(\dfrac{\dd\mu}{\dd \nu} \right) \; \dd \nu .
\end{equation}
\end{definition}
This entropy depends on the reference measure $\nu$, and
 satisfies properties such as monotonicity, concavity, and subadditivity  \cite{Whe91}, as expected. 
The formula (\ref{eq:rel-def})
can be extended to define the entropy for a finite but not necessarily unit mass measure.
\begin{lem}
The relative entropy of a finite measure $\eta$ relative to a measure $\nu$ on a measure space
$\mathcal{X}$, with $\eta \ll \nu$, is given by
\begin{equation}
 \label{eq:fin-rel-def}
S_\nu(\eta,\mathcal{X}) 
= \log \eta(\mathcal{X}) 
 - \frac{1}{\eta({\mathcal{X}})} \int_{{\mathcal{X}}}\left(\dfrac{\dd \eta}{\dd \nu} \right)
 \log \left(\dfrac{\dd\eta}{\dd \nu} \right)  \dd \nu  .
\end{equation}

\begin{proof}
Immediate from setting $ \frac{\dd \mu}{\dd \nu}=\frac{1}{\eta(\mathcal{X})} \frac{\dd \eta}{\dd \nu}$ in (\ref{eq:rel-def}).
\end{proof}
\end{lem}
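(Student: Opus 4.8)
The plan is to reduce the finite-mass case directly to the unit-mass definition (\ref{eq:rel-def}) by normalization, exactly as the hint indicates. First I would set $M := \eta(\mathcal{X})$, which is finite by hypothesis and which I take to be strictly positive (otherwise the entropy is undefined), and introduce the probability measure $\mu := \eta/M$. Since $\eta \ll \nu$ and scaling a measure by a positive constant scales its Radon--Nikodym derivative by that same constant, $\mu$ is a probability measure with $\mu \ll \nu$ and $\frac{\dd\mu}{\dd\nu} = \frac{1}{M}\frac{\dd\eta}{\dd\nu}$. This is the only genuinely measure-theoretic input, and it is routine.

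Next I would substitute this derivative into (\ref{eq:rel-def}) applied to $\mu$. Writing $f := \frac{\dd\eta}{\dd\nu}$ for brevity, the integrand becomes $\frac{f}{M}\log\frac{f}{M}$. Expanding the logarithm as $\log f - \log M$ then splits the integral into two pieces: one proportional to $\int_{\mathcal{X}} f\log f \,\dd\nu$ and one proportional to $\log M \int_{\mathcal{X}} f \,\dd\nu$.

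The key simplification is that $\int_{\mathcal{X}} f \,\dd\nu = \int_{\mathcal{X}} \frac{\dd\eta}{\dd\nu}\,\dd\nu = \eta(\mathcal{X}) = M$, so the second piece collapses to exactly $\log M = \log\eta(\mathcal{X})$, while the first contributes $-\frac{1}{M}\int_{\mathcal{X}} f\log f\,\dd\nu$. Defining $S_\nu(\eta,\mathcal{X}) := S_\nu(\mu,\mathcal{X})$ for the normalized measure $\mu$ then yields (\ref{eq:fin-rel-def}) directly.

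I do not anticipate a real obstacle: the statement is a one-line algebraic identity once the normalization is in place. The only points meriting a word of care are the positivity of $\eta(\mathcal{X})$, so that the division and $\log M$ are meaningful, and the tacit integrability of $f\log f$ against $\nu$ that makes the splitting of the integral legitimate; both are implicit in the assumption that the relative entropy is well-defined.
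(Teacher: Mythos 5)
Your proposal is correct and follows exactly the paper's argument: normalize $\eta$ by its total mass to obtain a probability measure, substitute $\frac{\dd\mu}{\dd\nu}=\frac{1}{\eta(\mathcal{X})}\frac{\dd\eta}{\dd\nu}$ into (\ref{eq:rel-def}), and expand the logarithm. The paper states this in one line; you have merely written out the routine computation, including the sensible caveats about $\eta(\mathcal{X})>0$ and integrability.
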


\begin{prop}
\label{prop:ent-max}
The uniform measure for the reference measure $\nu$ is given by $\frac{\dd \mu^*}{\dd \nu} = \frac{1}{\nu(\mathcal{X})}$, and its entropy by $S_\nu(\mu^*,{\mathcal{X}}) = \log \nu(\mathcal{X})$.
The entropy  is uniquely maximized by the uniform probability measure for $\nu$:
\begin{equation}
\label{eq:prop-1}
 S_\nu(\mu,{\mathcal{X}}) \leq  S_\nu(\mu^*,{\mathcal{X}}).
\end{equation}
More generally, for any finite measure, $\eta$, entropy is maximized by the reference measure itself:
\begin{equation}
\label{eq:prop-1_2}
 S_\nu(\eta,{\mathcal{X}}) \leq  S_\nu(\nu,{\mathcal{X}}), 
\end{equation}
where, again, $S_\nu(\nu,{\mathcal{X}}) = \log \nu(\mathcal{X})$.
\end{prop}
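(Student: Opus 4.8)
The plan is to treat the three assertions in sequence, reducing the later ones to a single core inequality for probability measures. First I would dispatch the two equalities that serve as the right-hand sides of \eqref{eq:prop-1} and \eqref{eq:prop-1_2}. Substituting the constant density $\frac{\dd\mu^*}{\dd\nu}=\frac{1}{\nu(\mathcal{X})}$ into \eqref{eq:rel-def} gives
\[
S_\nu(\mu^*,\mathcal{X}) = -\int_{\mathcal{X}} \frac{1}{\nu(\mathcal{X})}\log\frac{1}{\nu(\mathcal{X})}\,\dd\nu = \log\nu(\mathcal{X}),
\]
since the integrand is constant and one factor of $\nu(\mathcal{X})$ cancels. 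Likewise, taking $\eta=\nu$ in \eqref{eq:fin-rel-def}, the density $\frac{\dd\nu}{\dd\nu}=1$ makes the integral vanish, leaving $S_\nu(\nu,\mathcal{X})=\log\nu(\mathcal{X})$.

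The heart of the matter is the probability-measure inequality \eqref{eq:prop-1}. Writing $f=\frac{\dd\mu}{\dd\nu}$, I would exploit the elementary bound $\log t \le t-1$ for $t>0$, with equality if and only if $t=1$. Applying it with $t=\frac{1}{\nu(\mathcal{X})f}$ and integrating against $f\,\dd\nu$ yields
\[
\int_{\mathcal{X}} f\,\log\!\frac{1}{\nu(\mathcal{X})f}\,\dd\nu \;\le\; \int_{\mathcal{X}} f\!\left(\frac{1}{\nu(\mathcal{X})f}-1\right)\dd\nu \;=\; 1-1 = 0,
\]
using $\int_{\mathcal{X}} f\,\dd\nu=\mu(\mathcal{X})=1$ and $\int_{\mathcal{X}}\frac{1}{\nu(\mathcal{X})}\,\dd\nu=1$. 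Expanding the left-hand logarithm as $-\log\nu(\mathcal{X})-\log f$ identifies the left side with $S_\nu(\mu,\mathcal{X})-\log\nu(\mathcal{X})$, which is exactly \eqref{eq:prop-1}. For the uniqueness claim I would invoke the strictness $\log t < t-1$ for $t\ne 1$: equality in the integral forces $\frac{1}{\nu(\mathcal{X})f}=1$ for $\nu$-almost every point, i.e. $f=\frac{1}{\nu(\mathcal{X})}$ a.e., which is precisely $\mu=\mu^*$.

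The general finite-measure statement \eqref{eq:prop-1_2} then follows by normalization. By the preceding Lemma, $S_\nu(\eta,\mathcal{X})$ is by construction equal to $S_\nu(\mu,\mathcal{X})$ for the normalized probability measure $\mu=\eta/\eta(\mathcal{X})$; equivalently, a direct scaling check on \eqref{eq:fin-rel-def} shows $S_\nu$ is invariant under $\eta\mapsto c\eta$ for $c>0$. Applying \eqref{eq:prop-1} to this $\mu$ gives $S_\nu(\eta,\mathcal{X})=S_\nu(\mu,\mathcal{X})\le\log\nu(\mathcal{X})=S_\nu(\nu,\mathcal{X})$, as required.

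The algebra is entirely elementary; the step demanding genuine care is the measure-theoretic bookkeeping around the core inequality. In particular I would fix the convention $0\log 0=0$, confirm the integrals are well defined (a priori $\int f\log f\,\dd\nu$ may diverge to $-\infty$, in which case the bound is trivial), and verify that equality in the integrated form of $\log t\le t-1$ genuinely propagates to the pointwise a.e.\ identity needed for uniqueness. This last propagation is where essentially all the rigor resides.
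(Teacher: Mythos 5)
Your argument is correct, but it takes a different route from the paper's. The paper proves both inequalities by applying Jensen's inequality to the concave function $\log$ against the probability measure $\mu'\,\dd\nu$ (and then repeats the same Jensen computation verbatim for the finite-measure case), whereas you integrate the pointwise tangent-line bound $\log t\le t-1$ at $t=\frac{1}{\nu(\mathcal{X})f}$ — the classical Gibbs-inequality proof — and then dispose of the finite-measure case by observing that \eqref{eq:fin-rel-def} is invariant under $\eta\mapsto c\eta$, so it reduces to the probability case already proved. The two approaches are essentially equivalent in strength (the tangent-line bound is the scalar fact underlying Jensen for the logarithm), but yours buys two things: the equality analysis is cleaner, since strictness of $\log t<t-1$ for $t\ne1$ localizes the equality case to a pointwise a.e.\ identity, whereas the paper's appeal to ``concavity'' to get uniqueness is left implicit and is the weakest step of its proof; and the normalization reduction avoids duplicating the computation for \eqref{eq:prop-1_2}. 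The one place where your version needs the care you yourself flag at the end: equality in your integrated bound first gives $\nu(\mathcal{X})f=1$ only $\mu$-a.e.\ (i.e.\ on $\{f>0\}$), and you must then use $\int f\,\dd\nu=1$ to conclude $\nu(\{f>0\})=\nu(\mathcal{X})$ and hence $f=\frac{1}{\nu(\mathcal{X})}$ $\nu$-a.e.; since you explicitly identify this propagation as the point requiring rigor, the proposal stands.
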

\begin{proof}
Setting $\frac{\dd \mu^*}{\dd \nu} = \frac{1}{\nu(\mathcal{X})}$ in (\ref{eq:rel-def}) gives $S_\nu(\mu^*,{\mathcal{X}}) = \log \nu(\mathcal{X})$ by a direct computation. Similarly, using (\ref{eq:fin-rel-def}) and setting $\eta = \nu$, we similarly obtain $S_\nu(\nu,{\mathcal{X}}) = \log \nu(\mathcal{X})$.
For the second part, starting from Eq. (\ref{eq:rel-def}), we denote $\mu'=\dd\mu/\dd \nu$
and use the concavity of the logarithm and Jensen's inequality, which gives,
\begin{align*}
\nonumber
S_\nu(\mu,{\mathcal{X}}) &=\int_{{\mathcal{X}}}\mu' \log \left(\dfrac{1}{\mu'} \right) \; \dd \nu \\
\nonumber
&\leq  \log \int_{{\mathcal{X}}} \mu' \left(\dfrac{1}{ \mu'} \right) \dd \nu\\
\nonumber
&= \log \nu({\mathcal{X}}) \\
&= S_\nu(\mu^*,{\mathcal{X}}) .
\end{align*}
The entropy (\ref{eq:rel-def}) is concave so the maximum we found is a global maximum. Thus among all measures, only the uniform attains the maximum possible value of entropy. 
Similarly, starting from (\ref{eq:fin-rel-def}) and  denoting $\eta' = \dd\eta/\dd \nu$, 
\begin{align*}
S_\nu(\eta,{\mathcal{X}}) &= \log \eta(\mathcal{X}) 
 + \frac{1}{\eta({\mathcal{X}})} \int_{{\mathcal{X}}} \eta' 
 \log \left(\dfrac{1}{\eta'} \right)  \dd \nu  \\
&\leq \log \eta(\mathcal{X}) 
 + \log \left[    \frac{1}{\eta({\mathcal{X}})} \int_{{\mathcal{X}}}    \eta' 
  \left(\dfrac{1}{ \eta'} \right)  \dd \nu   \right] \\
&= \log \eta(\mathcal{X}) 
 + \log   \frac{\nu( \mathcal{X}  )}{\eta({\mathcal{X}})} \\
&= \log \nu({\mathcal{X}}) \\
&=S_\nu(\nu,{\mathcal{X}}),
\end{align*}
so, as expected, the same maximum entropy is attained regardless of the definition used ((\ref{eq:rel-def}) or (\ref{eq:fin-rel-def})). 
\end{proof}
The definitions of entropy (\ref{eq:rel-def}) or (\ref{eq:fin-rel-def}) define the (logarithm of the) volume of a subspace of $\mathcal{X}$ based on the information or constraints defined by $\mu$ (in (\ref{eq:rel-def}))  or $\eta$ (in (\ref{eq:fin-rel-def})). Naturally, when no new information is introduced (uniform measure in (\ref{eq:rel-def}) or $\eta = \nu$ in (\ref{eq:fin-rel-def})), the space is not constrained, so we recover the (log) volume of the entire space, $ \log \nu(\mathcal{X})$, which is naturally the maximal value. 

In the natural extension of the definition of entropy to continuous systems, negative values of entropy also become possible. This is because, while discrete probabilities always fall within $[0,1]$, probability densities can also take  values in $(1, \infty)$. While an entropy of $0$ in the discrete case means we have maximal information, that is, enough to single out a single state (for example, think of a weighted die that always lands on $6$, so $S(\{p_1, \ldots, p_6\}) = S(\{  0,\ldots,0,1  \}) =\log 1 = 0$), an entropy $S <0$ in the continuous case indicates an arbitrary level of precision in our knowledge, seeing as we can in principle be as precise as an infinitesimal range of values for our state.

Nevertheless, this extension is unnatural in that it strays from entropy's primary and most powerful interpretation as a volume of available state space after the introduction of some information (constraints). We argue that this inconsistency arises because the true basis for the definition of entropy is not a probability density but a weight or membership function, which always takes values in $[0,1]$ but does not necessarily need to have unit mass, much like the functions defining fuzzy sets  
\cite{Zad65,Zad68}. In the discrete case, this distinction is trivial since discrete probabilities always take values in $[0,1]$, but it becomes significant when we generalize to continuous systems.

Let 
$\frac{\dd \mu}{\dd \nu}: \mathcal{X} \rightarrow [0,1]$
for some set $\mathcal{X}$. 
We can define a {\it weight function}, $\varphi := -\log \frac{\dd \mu}{\dd \nu} \in [0,\infty]$. 
Then $e^{-\varphi}=\frac{\dd \mu}{\dd \nu}  \in [0,1]$ and   $\mu(\mathcal{X}) = \int_\mathcal{X} \frac{\dd \mu}{\dd \nu} \dd \nu  
=  \int_\mathcal{X} e^{-\varphi} \dd \nu$, so using definition  (\ref{eq:fin-rel-def}) for the entropy of a finite measure $\mu$ relative to $\nu$, we have
\begin{equation}
\label{eq:weight-ent}
S_\nu (\mu, \mathcal{X}) = \log  \int_\mathcal{X} e^{-\varphi} \dd \nu + \frac{1}{ \int_\mathcal{X} e^{-\varphi} \dd \nu}  \int_\mathcal{X} \varphi \, e^{-\varphi} \, \dd \nu =: S (\varphi, \mathcal{X}) \, .
\end{equation}
For every point in $\mathcal{X}$, the {\it information function} $e^{-\varphi} \in [0,1]$ gives the degree to which that point is a member of the constrained subspace. 
Written this way, we see that $e^{-\varphi} \dd \nu$ is a weighted measure, and the entropy (\ref{eq:weight-ent}), which is nonnegative,\footnote{\;Of course this may fail to hold if $\mu(\mathcal{X}) < 1$; more on this below.} can be interpreted as the (log) volume of the constrained subspace plus the weighted average of the weight function $\varphi$  itself.\footnote{\;The second term in (\ref{eq:weight-ent}) can also be seen as measuring the ``softness" or ``leakiness" of a constraint, that is, the extent to which the constraint or information is enforced.  With ``hard" constraints, sets are clearly demarcated because the weight function is either $0$ or $1$—a given point is either in the set or is not. With softer constraints, points can leak out or bleed through: the constraint is more a suggestion than a rule. Softer constraints contribute to the entropy because, intuitively, they define a subset that is more uniform.}

Notice that for $\varphi = 0$ (no constraint or information, which happens when $\mu = \nu$), we have 
$S (0, \mathcal{X}) = \log \nu(\mathcal{X})$. 
Thus, every point is equally a member of the subspace ($e^\varphi = e^0 =1$), so all points are included, giving the (log) volume of the entire space. 
In fact, this holds for $\varphi = a$, for any constant  $a \geq 0$, since
\begin{align*}
S (a, \mathcal{X}) &= \log  \int_\mathcal{X} e^{-a} \dd \nu + \frac{1}{ \int_\mathcal{X} e^{-a} \dd \nu}  \int_\mathcal{X} a \, e^{-a} \, \dd \nu \\
&= \log \big(e^{-a} \nu (\mathcal{X})\big) + \frac{1}{e^{-a} \nu(\mathcal{X})} a e^{-a} \nu(\mathcal{X}) \\
&= -a +\log \nu (\mathcal{X}) + a \\
&= \log \nu (\mathcal{X}) .
\end{align*}
As we will later see, $\varphi = a$ corresponds to Haar measures, and the constant $e^{-a}$ is in fact nothing more than the arbitrary multiplicative constant up to which Haar measures are unique. Thus, at least heuristically, we see that the entropy for Haar measures is the same (and is the maximum, in fact), regardless of the arbitrary multiplicative factor.

On the other end, for $\varphi \rightarrow \infty$ (maximal constraint or information, which happens when $\frac{\dd \mu}{\dd \nu} =0$) on a constrained subset of unit mass ($ \int_\mathcal{X} e^{-\varphi} \dd \nu=1$), $S_\nu(\mu, \mathcal{X}) = 0$, as expected.


\section{Information Measures}


We assume that all groups are locally compact topological groups with continuous group action. 

Our main goal in this section is to show how to consistently compare the information encoded in different measures, and in so doing define the class of measures whose entropies will always correspond to the subspace volumes defined by the information in those respective measures. To that end, we first define the following.  

\begin{definition}
Two measures $\rho$ and $\xi$ on a measure space $(\mathcal{X}, \nu)$ (where $\nu$ is the {\it reference measure}) are {\it sup-normalized}  if $\rho, \xi \ll \nu$ and 
\begin{equation}
\label{eq:sup-norm}
\displaystyle \sup_{x \in \mathcal{X}} \frac{\dd \rho}{\dd \nu} (x) = \displaystyle \sup_{x \in \mathcal{X}} \frac{\dd \xi}{\dd \nu} (x)\, .
\end{equation}
\end{definition}
Of course, two finite measures of bounded density can always be sup-normalized by multiplying them by  appropriate constants. 

Intuitively, sup-normalization equates the peak weights each measure assigns points in the measure space relative to the reference measure. 
Doing so allows one to control for arbitrary scaling differences between different measures.
For example, since two Haar measures on a given group are unique up to a multiplicative constant,  the notion of sup-normalization serves as an appropriate condition for these Haar measures to be ``consistent".
Let $\mu_G$ and $\mu_G'$ be Haar measures for the topological group $G$.
If  $\mu_G$ and $\mu_G'$ are sup-normalized (regardless of which one serves as the reference measure), then $\mu_G(A) = \mu_G'(A)$ for all $A \subset G$, so the multiplicative constant relating them equals one. Sup-normalizing two Haar measures is also natural because, as we saw at the end of the last section, the entropies for two Haar measures are the same regardless of the multiplicative constant relating them. 


The next proposition establishes some basic results regarding sup-normalized measures. 
\begin{prop}
\label{prop:max-set}
Let $G$ be a  group, and $\rho$ and $\xi$ sup-normalized measures on $G$, with reference measure $\nu$. 
Then,\\
($i$) for all compact $A \subset G$, if $\nu = \mu_G$ (the sup-normalized Haar measure on $G$),
$$
\displaystyle
\dfrac{\displaystyle \sup_{g \in G} \rho(gA)}{\mu_G (A)}  \leq  \sup_{x \in G} \frac{\dd \rho}{\dd \mu_G} (x), \;\;\;\,\,  \dfrac{ \displaystyle\sup_{g \in G} \xi(gA)}{\mu_G (A)}  \leq  \sup_{x \in G} \frac{\dd \xi}{\dd \mu_G} (x)   \, ;
$$
\\
($ii$) if $\xi =  C \; \nu$, 
 then $\frac{\dd \rho}{\dd \xi} \leq 1$, and  $\rho (A) \leq \xi(A)$. 
\end{prop}
\begin{proof}
Writing $c:= \sup_{x \in G} \frac{\dd \rho}{\dd \mu_G} (x) =  \sup_{x \in G} \frac{\dd \xi}{\dd \mu_G} (x)$, we have, for every $A \subset G$, 
\begin{align}
\label{ineq1}
\sup_{g \in G} \rho(gA) &= \sup_{g \in G} \int_{gA} \frac{\dd \rho}{\dd \mu_G} (x) \; \dd \mu_G (x) \\
\nonumber
&\leq c  \sup_{g \in G} \int_{gA}  \dd \mu_G (x) \\
\nonumber
&=  c  \sup_{g \in G} \mu_G (gA)\\
\nonumber
&= c \; \mu_G(A) .
\end{align}
The result for $\xi$ is entirely analogous, which proves ($i$).

For ($ii$), we first note that by sup-normalization, $C = \sup_{x \in G} \frac{\dd \rho}{\dd \nu} = \sup_{x \in G} \frac{\dd \xi}{\dd \nu}$. Using $\xi =  C \nu$ and sup-normalization we have
\begin{align*}
\sup_{x \in G} \frac{\dd \rho}{\dd \nu} &= C \\
\sup_{x \in G}\left( \frac{\dd \rho}{\dd \xi}   \frac{\dd \xi}{\dd \nu} \right)&= C \\
C \sup_{x \in G}\left( \frac{\dd \rho}{\dd \xi}  \right)&= C \\
\sup_{x \in G} \frac{\dd \rho}{\dd \xi}  &= 1\; , 
\end{align*}
so $\frac{\dd \rho}{\dd \xi} \leq 1$. Finally, 
$$
\rho(A) = \int_A \frac{\dd \rho}{\dd\xi} \dd \xi \leq \int_A \dd \xi = \xi(A) ,
$$
completing the proof.
\end{proof}
\begin{remark} 
\label{rem1}
We  take $c=1$ for simplicity. Notice that, by statement ($i$) of Proposition \ref{prop:max-set}, for a reference measure that is Haar, the variation in $\dd \rho/\dd \mu_G$ is due entirely to that of $\rho$, and in this sense measures the new information introduced by $\rho$ on $(G, \mu_G)$ (see Figure \ref{fig1}). 
In other words, the translation invariance of $\mu_G$ ensures that the only variation in $\frac{\dd \rho}{\dd \nu}$  is due to that in $\rho$ (and similarly for $ \frac{\dd \xi}{\dd \nu}$).


With this choice of reference measure, statement ($ii$) in Proposition \ref{prop:max-set} becomes: 
If $\xi = \mu_G$ (where $\mu_G$ is a Haar measure on $G$), then $\rho(A) \leq \mu_G(A)$ and $\frac{\dd \rho}{\dd \mu_G} \leq 1$.
Thus, sup-normalized measures on $(G, \mu_G)$ encode information by defining a subset of the measure space  that gives different weights to different points in the space, as compared to a reference measure that gives the same  weight to all the points. 
\end{remark}

\begin{figure}
\includegraphics[width=8cm]{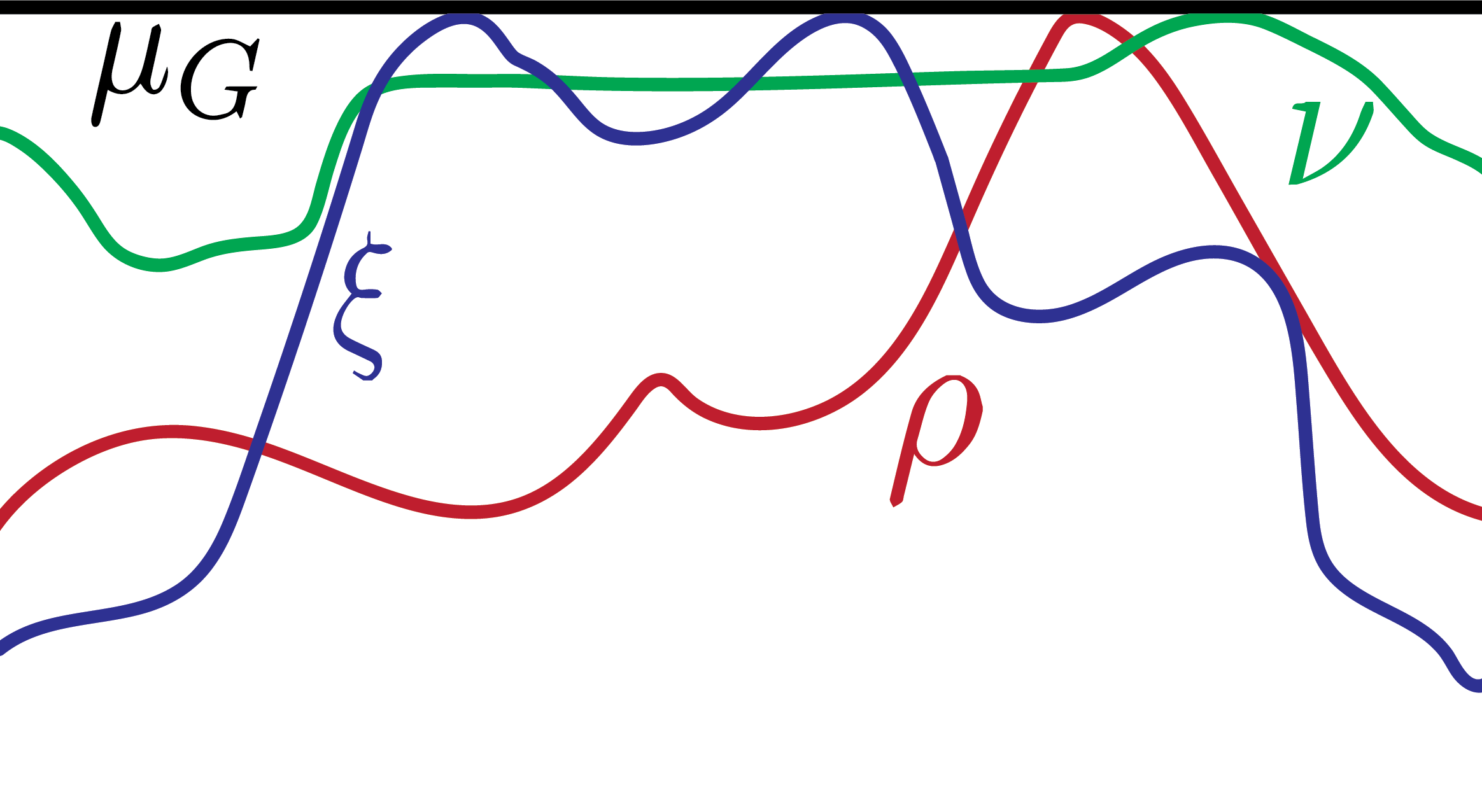}
\caption{Three measures, $\rho$, $\xi$, and $\nu$, sup-normalized with respect to the Haar measure $\mu_G$.}
\label{fig1}
\end{figure}

These results motivate us to define the following, which also allows us to explicitly distinguish the measures we need from probability measures.
\begin{definition}
An {\it information measure} is a measure $\rho$ on a measure space $(\mathcal{X}, \nu)$ (where $\nu$ is the {\it reference measure}) such that $\rho \ll \nu$  and $\dfrac{\dd \rho}{\dd \nu} \in [0,1]$. We call $\dfrac{\dd \rho}{\dd \nu} $ the {\it information function}. 
\end{definition}

Proposition \ref{prop:max-set} shows that information measures naturally arise as the sup-normalized measures on a given measure space. They are so named because they provide  information that allows the definition of some subspace of the given measure space. Moreover, as we will see explicitly below, measures assigning the same weight to every point on the measure space are the most uninformative, or entropy-maximizing. 
Since information measures need  not  necessarily be of unit mass, we will  use the entropy as defined for finite measures by formula (\ref{eq:fin-rel-def}).

We end this section with the following nonnegativity result for the entropy of any information measure.
\begin{lem}{(Nonnegativity.)}
\label{lem:nonneg}
For any information measure $\mu$ on a measure space $(\mathcal{X}, \nu)$, if $\mu(\mathcal{X}) \geq 1$, then,
$$
S_\nu (\mu , \mathcal{X}) \geq 0 .
$$
For $\mu(\mathcal{X}) \leq 1$, entropy is nonnegative if
$-\int_\mathcal{X} \frac{\dd \mu}{\dd \nu} \log  \left( \frac{\dd \mu}{\dd \nu} \right) \dd \nu \geq - \mu(\mathcal{X}) \log \mu(\mathcal{X})$.
\end{lem}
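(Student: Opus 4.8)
The plan is to read nonnegativity directly off the finite-measure entropy formula (\ref{eq:fin-rel-def}), exploiting the defining feature of an information measure, namely that its information function takes values in $[0,1]$. Writing $f := \frac{\dd\mu}{\dd\nu}$, formula (\ref{eq:fin-rel-def}) reads
\[
S_\nu(\mu,\mathcal{X}) = \log\mu(\mathcal{X}) - \frac{1}{\mu(\mathcal{X})}\int_{\mathcal{X}} f\log f\,\dd\nu ,
\]
and the whole argument is a matter of tracking the sign of each of the two terms on the right (here $\mu(\mathcal{X})>0$, as required for the formula to make sense).

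The first key step is the observation that, because $f\in[0,1]$ pointwise, we have $\log f\le 0$ and hence $f\log f\le 0$ everywhere on $\mathcal{X}$. Consequently $\int_{\mathcal{X}} f\log f\,\dd\nu\le 0$, and since $\mu(\mathcal{X})>0$ the second term $-\frac{1}{\mu(\mathcal{X})}\int_{\mathcal{X}} f\log f\,\dd\nu$ is nonnegative. I would note in passing that this term is well defined as an element of $[0,+\infty]$: the integrand is nonpositive (with the usual convention $f\log f=0$ where $f=0$), so even if the integral diverges to $-\infty$ the term is $+\infty$ and nonnegativity is untouched; for a finite bound one can use $|t\log t|\le 1/e$ on $[0,1]$.

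From here the two cases are immediate. When $\mu(\mathcal{X})\ge 1$ we have $\log\mu(\mathcal{X})\ge 0$, so $S_\nu(\mu,\mathcal{X})$ is a sum of two nonnegative quantities and the conclusion follows. When $\mu(\mathcal{X})\le 1$ the first term $\log\mu(\mathcal{X})$ is nonpositive and must be dominated by the second; multiplying the desired inequality $S_\nu(\mu,\mathcal{X})\ge 0$ through by $\mu(\mathcal{X})>0$ and rearranging turns it into precisely the stated hypothesis
\[
-\int_{\mathcal{X}} f\log f\,\dd\nu \ \ge\ -\mu(\mathcal{X})\log\mu(\mathcal{X}) ,
\]
so the sufficient condition is really just a restatement of nonnegativity in this regime.

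There is no substantive obstacle here; the content is entirely a sign count driven by the constraint $f\in[0,1]$. The only point that warrants a moment's care is the well-definedness and possible divergence of $\int_{\mathcal{X}} f\log f\,\dd\nu$, and as noted this never threatens the inequality, since that term always enters with a sign keeping it in $[0,+\infty]$.
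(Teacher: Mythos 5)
Your proof is correct and takes essentially the same route as the paper's: both read the result directly off formula (\ref{eq:fin-rel-def}) using $\frac{\dd\mu}{\dd\nu}\leq 1$ together with $\mu(\mathcal{X})\geq 1$ (the paper folds the two terms into a single integrand $-h\log h$ with $h=\frac{1}{\mu(\mathcal{X})}\frac{\dd\mu}{\dd\nu}\leq 1$, whereas you check the signs of the two terms separately, but this is the same sign count), and both treat the $\mu(\mathcal{X})\leq 1$ condition as a direct rearrangement of $S_\nu(\mu,\mathcal{X})\geq 0$. Your added remark on the well-definedness of $\int_{\mathcal{X}} f\log f\,\dd\nu$ is a small bonus the paper does not bother with.
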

\begin{proof}
We write formula (\ref{eq:fin-rel-def}) as, 
$$
S_\nu (\mu , \mathcal{X}) =- \int_{\mathcal{X}} \left( \frac{1}{\mu(\mathcal{X}) } \frac{\dd \mu}{\dd \nu} \right) 
\log  \left( \frac{1}{\mu(\mathcal{X}) } \frac{\dd \mu}{\dd \nu} \right) \dd \nu .
$$
Clearly, the entropy is positive if $\frac{1}{\mu(\mathcal{X}) } \frac{\dd \mu}{\dd \nu} \leq 1$. Since $\mu$ is an information measure, $\frac{\dd \mu}{\dd \nu} \leq 1$, so $\mu(\mathcal{X}) \geq 1$ guarantees nonnegativity. 

For $\mu(\mathcal{X}) \leq 1$, the condition follows from formula \ref{eq:fin-rel-def} by requiring $S_\nu (\mu , \mathcal{X}) \geq 0$.  The condition is trivially satisfied for $\mu(\mathcal{X}) \geq 1$, since in that case $- \mu(\mathcal{X}) \log \mu(\mathcal{X}) \leq 0$ but we always have $-\int_\mathcal{X} \frac{\dd \mu}{\dd \nu} \log  \left( \frac{\dd \mu}{\dd \nu} \right) \dd \nu \geq 0$ for information measures.
\end{proof}

Given that by Proposition  \ref{prop:ent-max}    the entropy $S_\nu(\mu, \mathcal{X})$ is bounded above by $\log \nu(\mathcal{X})$,  we definitely need $\nu(\mathcal{X}) \geq 1$, since otherwise the entropy would be bounded above by a negative number. 
Since $\mu$ is an information measure,  $\mu(\mathcal{X}) \geq 1$ implies that  $\nu(\mathcal{X}) \geq 1$ as well, since by Proposition \ref{prop:max-set},  $\mu(\mathcal{X}) \leq \nu(\mathcal{X})$. Negative entropies only occur  for $\mu(\mathcal{X}) \leq 1$, and even then only if (rewriting the condition in terms of information functions as in the previous section) 
$\dfrac{1}{ \int_\mathcal{X} e^{-\varphi} \dd \nu}  \int_\mathcal{X} \varphi \, e^{-\varphi} \, \dd \nu \leq -\log  \int_\mathcal{X} e^{-\varphi} \dd \nu$, that is, if the entropy gained from the softness of the constraint is less than the (log) volume of the subspace. 

For our purposes, however, $\mu(\mathcal{X}) \geq 1$ is enough, including information measures that are also probability measures, or those that define larger subsets. 
I hope to address the $\mu(\mathcal{X}) \leq 1$ case in further detail in future work. 

\section{Entropy, Symmetry, and Haar measures}

This section is devoted to proving the following theorems:

\begin{theorem}
\label{thm:gen-ineq}
Let  $\mu_G$ be a Haar measure on a group $G$, and $\rho$ and $\xi$  information measures on $(G, \mu_G)$. Then for all compact  $A \subset G$,
\begin{equation}
\label{eq:gen-ineq}
S_\xi(\rho, A) \leq S_{\mu_G} (\rho, A) \leq S_{\mu_G} (\mu_G, A) ,
\end{equation}
where $S_{\mu_G} (\mu_G, A) = S_{\mu_G} (\mu_G, g  A) =\log \mu_G(A)$.
\end{theorem}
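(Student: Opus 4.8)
The plan is to treat the displayed chain as three separate claims and to observe that two of them are essentially recorded already. The rightmost equality $S_{\mu_G}(\mu_G, A) = S_{\mu_G}(\mu_G, gA) = \log\mu_G(A)$ and the middle inequality $S_{\mu_G}(\rho, A) \leq S_{\mu_G}(\mu_G, A)$ are restatements of Proposition \ref{prop:ent-max} applied on the set $A$ (with the reference measure $\mu_G$ restricted to $A$): the proposition gives $S_{\mu_G}(\rho, A) \leq S_{\mu_G}(\mu_G, A) = \log\mu_G(A)$, where finiteness of $\rho(A)$ is guaranteed because $\rho$ is an information measure, so $\frac{\dd\rho}{\dd\mu_G}\leq 1$, and $A$ is compact, whence $\rho(A) \leq \mu_G(A) < \infty$. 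The translation-invariant value then follows from left-invariance of the Haar measure, $\mu_G(gA) = \mu_G(A)$, which gives $S_{\mu_G}(\mu_G, gA) = \log\mu_G(gA) = \log\mu_G(A)$. The genuinely new content is the leftmost inequality $S_\xi(\rho, A) \leq S_{\mu_G}(\rho, A)$, a monotonicity-under-change-of-reference statement, and that is where I would concentrate.

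For that inequality I would first expand both entropies using formula (\ref{eq:fin-rel-def}). Each carries the same additive term $\log\rho(A)$, which cancels in the difference. Rewriting the remaining integrals against $\rho$ itself, using $\frac{\dd\rho}{\dd\xi}\,\dd\xi = \dd\rho$ and $\frac{\dd\rho}{\dd\mu_G}\,\dd\mu_G = \dd\rho$, turns the difference into
\begin{equation*}
S_\xi(\rho, A) - S_{\mu_G}(\rho, A) = -\frac{1}{\rho(A)}\int_A \left(\log\frac{\dd\rho}{\dd\xi} - \log\frac{\dd\rho}{\dd\mu_G}\right)\dd\rho .
\end{equation*}
The chain rule for Radon--Nikodym derivatives, $\frac{\dd\rho}{\dd\xi} = \frac{\dd\rho}{\dd\mu_G}\cdot\frac{\dd\mu_G}{\dd\xi}$, collapses the bracket to $\log\frac{\dd\mu_G}{\dd\xi}$, so the claim reduces to showing $\int_A \log\frac{\dd\mu_G}{\dd\xi}\,\dd\rho \geq 0$.

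This final step is immediate from the defining property of an information measure: since $\xi$ is an information measure on $(G,\mu_G)$, its information function satisfies $\frac{\dd\xi}{\dd\mu_G}\leq 1$, hence $\frac{\dd\mu_G}{\dd\xi}\geq 1$ wherever defined, so $\log\frac{\dd\mu_G}{\dd\xi}\geq 0$ and the integral of a nonnegative function against the positive measure $\rho$ is nonnegative. I expect the main obstacle to be measure-theoretic bookkeeping rather than any estimate: one must check that $S_\xi(\rho, A)$ is well defined, that is, that the absolute-continuity chain $\rho\ll\xi\ll\mu_G$ holds so that $\frac{\dd\rho}{\dd\xi}$ exists and the chain rule applies, and one must handle the null sets where $\frac{\dd\xi}{\dd\mu_G}$ or $\frac{\dd\rho}{\dd\mu_G}$ vanish and the logarithms degenerate (assuming also $\rho(A)>0$ so the prefactor makes sense). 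On such sets either the $\rho$-measure is zero, by $\rho\ll\xi$, or the convention $0\log 0 = 0$ applies, so they do not contribute. Once these caveats are dispatched, the inequality drops out with no further work.
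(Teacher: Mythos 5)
Your proposal is correct and follows essentially the same route as the paper: the middle inequality and the equalities come from the entropy-maximization and Haar-invariance results (Proposition \ref{prop:ent-max} / Proposition \ref{prop:invariant-ent} and Corollary \ref{cor:invariant-ent}), and your Radon--Nikodym chain-rule computation for the leftmost inequality is exactly the paper's Lemma \ref{lem:change-vars} re-derived inline, with the sign of $\log\frac{\dd\xi}{\dd\mu_G}\leq 0$ supplied by the information-measure property. The only cosmetic difference is that you invert to $\frac{\dd\mu_G}{\dd\xi}\geq 1$ (which needs the ``wherever defined'' caveat you already flag), whereas the paper keeps $\frac{\dd\xi}{\dd\mu_G}\leq 1$ and so avoids that inversion; both arguments share the same implicit assumption that $\rho\ll\xi$ so that $S_\xi(\rho,A)$ is defined.
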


\begin{theorem}
\label{thm:rel-sym}
Let $H, G$ be any two topological  groups with $H \leq G$, 
and let $\mu_H, \mu_G$  be the corresponding Haar measures, respectively.
Let $\xi$ be any information measure such that $\xi \ll \mu_H$ and $\xi \ll \mu_G$.
If the Haar measures are sup-normalized, then for all $H \leq A \leq G$,
\begin{align}
\label{eq:thm-1a}
S_{\mu_H}(\xi, H)  &\leq S_{\mu_G}(\xi, A) , 
\end{align}
with equality if and only if $A = H$.
\end{theorem}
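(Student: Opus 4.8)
The plan is to split the inequality into a change-of-reference step on the common subgroup $H$ and a change-of-domain step inside the larger group, and then to analyze tightness in each.

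First I would use sup-normalization to remove the reference-measure discrepancy on $H$. Assuming (as the nondegeneracy of the entropies over $H$ requires) that the restriction $\mu_G|_H$ is a genuine Haar measure on $H$, note that it is left-$H$-invariant, since for $h\in H$ and $E\subseteq H$ we have $\mu_G(hE)=\mu_G(E)$ by left-invariance of $\mu_G$ under $h\in H\le G$. By uniqueness of Haar measure, $\mu_G|_H=k\,\mu_H$ for some $k>0$, and the sup-normalization hypothesis (with the normalization $c=1$ of Remark \ref{rem1}) forces $k=1$, exactly as in the two-Haar-measure argument preceding Proposition \ref{prop:max-set}. Hence $\tfrac{\dd\xi}{\dd\mu_H}=\tfrac{\dd\xi}{\dd\mu_G}$ holds $\mu_G$-a.e.\ on $H$, and substituting this common information function into (\ref{eq:fin-rel-def}) gives $S_{\mu_H}(\xi,H)=S_{\mu_G}(\xi,H)$. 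This is where $\xi\ll\mu_H$ and $\xi\ll\mu_G$ are both used: they guarantee that both information functions exist and agree on $H$. The theorem then reduces to the single-reference statement $S_{\mu_G}(\xi,H)\le S_{\mu_G}(\xi,A)$, with equality iff $A=H$.

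Next I would attack this domain-monotonicity claim directly from (\ref{eq:fin-rel-def}). Writing $f=\tfrac{\dd\xi}{\dd\mu_G}\in[0,1]$, $V(A)=\xi(A)=\int_A f\,\dd\mu_G$ and $W(A)=-\int_A f\log f\,\dd\mu_G\ge 0$, the entropy is $S_{\mu_G}(\xi,A)=\log V(A)+W(A)/V(A)$; both $V$ and $W$ are monotone set functions, increasing as the domain grows from $H$ to $A$ by the nonnegative increments $\int_{A\setminus H}f\,\dd\mu_G$ and $-\int_{A\setminus H}f\log f\,\dd\mu_G$, where the latter is nonnegative precisely because the information-measure constraint gives $f\le 1$. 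The goal is to show that the net increment $\bigl(\log V(A)+W(A)/V(A)\bigr)-\bigl(\log V(H)+W(H)/V(H)\bigr)$ is nonnegative, and strictly positive once $\mu_G(A\setminus H)>0$. Here the bound $f\le 1$ is essential for controlling the signs, and I would combine it with the concavity/Jensen argument already used in Proposition \ref{prop:ent-max} (applied to the added region $A\setminus H$) together with the monotonicity property of relative entropy cited after (\ref{eq:rel-def}).

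For the equality analysis I would observe that $A$ and $H$ are subgroups with $H\le A\le G$, so $A\setminus H$ is a union of nontrivial cosets of $H$; since $A$ is open and Haar measure is strictly positive on nonempty open subsets of $G$, equality in the domain step can hold only if $\mu_G(A\setminus H)=0$, which forces $A=H$. I expect the main obstacle to be precisely the domain-monotonicity step, which is genuinely delicate: it is \emph{not} a formal consequence of Jensen's inequality, because enlarging the domain simultaneously raises the log-volume term $\log V(A)$ and enlarges the denominator of the averaged-softness term $W(A)/V(A)$, and these effects compete. Making this rigorous requires exploiting the information-measure bound $f\le 1$ and the positivity of $\mu_G$ on the cosets of $H$ to exclude the degenerate configurations (strongly constrained $\xi$ on $H$ diluted by a nearly-uniform contribution on $A\setminus H$) in which the softness term could otherwise shrink faster than the volume term grows; identifying exactly which hypotheses secure both the inequality and the sharp equality condition is the crux of the argument.
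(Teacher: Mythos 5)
Your first step (using sup-normalization and uniqueness of Haar measure to force $\mu_G|_H=\mu_H$, hence $S_{\mu_H}(\xi,H)=S_{\mu_G}(\xi,H)$) is correct, matches the paper's first step, and is argued more carefully than in the paper. The gap is exactly where you say it is: the domain-monotonicity claim $S_{\mu_G}(\xi,H)\le S_{\mu_G}(\xi,A)$ is left unproven, and it is the entire content of the theorem once the reference measures have been identified on $H$. The paper disposes of it in one line, asserting that it follows from $S_{\mu_G}(\xi,X)\ge 0$ for $X\subset G\setminus H$ ``or by invoking the monotonicity of entropy''; neither assertion is a proof, because the normalization factor $1/\xi(A)$ in (\ref{eq:fin-rel-def}) couples the contribution of $H$ to that of $A\setminus H$, so entropy is not additive or monotone over disjoint pieces in the way the paper's sentence suggests. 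So you have not merely failed to finish a routine step --- you have isolated a step that the paper does not actually establish either.

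Worse, the ``degenerate configurations'' you flag are genuine counterexamples, not obstacles to be excluded by cleverness. In your notation, taking $w=0$ the increment is $\log(1+v/V)-\frac{(W/V)(v/V)}{1+v/V}$, which is negative whenever the $f$-weighted average $W/V$ of $-\log f$ over $H$ exceeds $(1+t)\log(1+t)/t$ with $t=v/V$; since $-f\log f>f$ for $f<e^{-1}$, the ratio $W/V$ can be made as large as one likes. Concretely, take $G=A=\mathbb{Z}_4$, $H=\{0,2\}$, counting measures, and $f=\frac{\dd\xi}{\dd\mu_G}$ with $f(0)=f(2)=e^{-2}$, $f(1)=1$, $f(3)=0$: then $S_{\mu_H}(\xi,H)=\log 2\approx 0.693$ while $S_{\mu_G}(\xi,G)=\log(1+2e^{-2})+\frac{4e^{-2}}{1+2e^{-2}}\approx 0.666$, violating (\ref{eq:thm-1a}). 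The stated equality criterion also fails: if $f\equiv 0$ on $A\setminus H$ with $\mu_G(A\setminus H)>0$, both sides of (\ref{eq:thm-1a}) coincide although $A\ne H$. So the step you correctly identify as the crux cannot be completed without additional hypotheses on $\xi$ (e.g., controlling how far $\frac{\dd\xi}{\dd\mu_G}$ dips below $e^{-1}$ on $H$, or requiring $\xi$ to be close to Haar on $H$); your diagnosis of where the difficulty lies is accurate, but the inequality as stated is not salvageable by the route you outline, nor by the paper's.
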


Theorem \ref{thm:gen-ineq} provides several important insights. 
First, together with Proposition \ref{prop:max-set} and Remark \ref{rem1}, it further justifies why, for any group, the most natural choice of reference measure is the Haar measure.  
Intuitively, the entropy with a reference measure that is not Haar is always bounded above by the entropy with a Haar reference measure because, for the former, the reference measure itself defines a subspace based on the information encoded by that particular reference measure. 
Second, Theorem \ref{thm:gen-ineq} shows that the entropy is maximized by the Haar measure, which takes the role of the uniform measure for the given group, making this maximum entropy invariant under translations by any group element. 
This makes rigorous several notions that were already useful in the physics community, but seem to have hitherto received only heuristic justifications (see, {\it e.g.}, \cite{DunTalHan07, GliLazChe20}). 


Theorem \ref{thm:rel-sym} allows the comparison of the entropies of different measures that are invariant under specific symmetries,  showing that measures that are ``more symmetric," in the sense that they are invariant under more symmetries, have higher entropies. 
Moreover, it shows that no matter the information measure $\xi$, the entropy for a group relative to its Haar measure will always be greater than that of any of its subgroups with their respective Haar measures. 

Before proving Theorem \ref{thm:rel-sym} we will establish a couple of partial results to motivate it.
For discrete groups, the Haar measure is simply the counting measure, so a particular instance of Theorem \ref{thm:rel-sym} follows almost immediately. 
For example, consider the dihedral groups $D_n, D_m$, where $n \leq m$. 
The entropies for these groups satisfy $S(D_n) = \log 2n \leq \log 2m = S(D_m)$. 
This is the most basic application of Theorem \ref{thm:rel-sym}.
In more general terms, we have the following. 
\begin{lem}
 Let $H, G$ be any two discrete topological groups with $H \leq G$, and let $\nu$ be the counting measure (cardinality). Then
\begin{equation}
\label{lemm-2}
S_\nu(\nu,H) = \log \nu(H) \leq \log \nu(G) = S_\nu(\nu,G) .
\end{equation}
\end{lem}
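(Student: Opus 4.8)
The plan is to apply Proposition \ref{prop:ent-max} twice, once for each group, and then reduce the inequality to a purely set-theoretic comparison of cardinalities. First I would observe that for a discrete group the counting measure $\nu$ is precisely the Haar measure, and that the reference measure $\nu$ is finite on a finite group (and we only need the statement where both $H$ and $G$ are finite, or else interpret $\log \nu(G) = +\infty$ trivially). Taking $\eta = \nu$ in the finite-measure entropy formula (\ref{eq:fin-rel-def}), the integral term vanishes because $\dd \nu / \dd \nu \equiv 1$ and $\log 1 = 0$, leaving $S_\nu(\nu, H) = \log \nu(H)$ and likewise $S_\nu(\nu, G) = \log \nu(G)$. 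This is exactly the identity already recorded in Proposition \ref{prop:ent-max}, so no fresh computation is needed: I would simply cite $S_\nu(\nu,\mathcal{X}) = \log \nu(\mathcal{X})$ for $\mathcal{X} = H$ and $\mathcal{X} = G$.

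With both entropies rewritten as logarithms of cardinalities, the whole claim collapses to the inequality $\log \nu(H) \leq \log \nu(G)$, equivalently $\nu(H) \leq \nu(G)$. Since $H \leq G$ means $H$ is a subgroup of $G$, we have the set inclusion $H \subseteq G$, so $\nu(H) = |H| \leq |G| = \nu(G)$ by monotonicity of the counting measure under inclusion. Applying the monotonicity of $\log$ then yields (\ref{lemm-2}). The two outer equalities in (\ref{lemm-2}) are the instances of Proposition \ref{prop:ent-max} noted above, and the central inequality is the cardinality comparison, so the chain assembles immediately.

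There is essentially no obstacle here: the statement is the discrete specialization of Theorem \ref{thm:rel-sym}, and every ingredient is already in hand. The only point worth flagging is the degenerate case where $G$ is infinite (so $\nu(G) = \infty$); there the inequality holds vacuously since the right-hand side is $+\infty$, and one may restrict attention to finite groups if one wants all quantities finite. I would state the proof in two lines, invoking Proposition \ref{prop:ent-max} for the endpoint equalities and set inclusion $H \subseteq G$ for the middle inequality, and leave it at that.
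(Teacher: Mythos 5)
Your proposal is correct and matches the paper's own proof: both invoke Proposition \ref{prop:ent-max} to identify $S_\nu(\nu,\mathcal{X}) = \log\nu(\mathcal{X})$ and then conclude from $H \subseteq G$ and monotonicity of the counting measure. The extra remarks about the infinite case are harmless elaboration but not needed.
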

\begin{proof}
By Proposition \ref{prop:ent-max}, $S_\nu(\nu, X) = \log \nu(X)$, and the statement immediately follows since $H \leq G$. 
\end{proof}

If both groups are subgroups of some larger group, we also have the following.

\begin{prop}
\label{prop:1}
 Let $H, G \leq \mathcal{X}$ be two topological  groups,  $\mu_H, \mu_G$  their respective (normalized) Haar measures, and $\nu$  a Haar measure on the topological group $\mathcal{X}$. 
If $0<\nu(H) \leq \nu(G)$, then
\begin{equation}
\label{eq:thm-3}
S_\nu (\mu_H, H) \leq S_\nu (\mu_G, G) .
\end{equation}
\end{prop}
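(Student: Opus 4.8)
The plan is to reduce the claim to Proposition \ref{prop:ent-max} by identifying each normalized Haar measure $\mu_H$ (respectively $\mu_G$) with the uniform measure for the reference measure $\nu$ on the subgroup $H$ (respectively $G$). The crucial observation is that, because $\nu$ is left-invariant under all of $\mathcal{X}$ and hence under $H \leq \mathcal{X}$, its restriction $\nu|_H$ to $H$ is a left-$H$-invariant measure: for $h \in H$ and measurable $A \subseteq H$ we have $hA \subseteq H$ and $\nu|_H(hA) = \nu(hA) = \nu(A) = \nu|_H(A)$, using translation-invariance of $\nu$.

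First I would invoke the hypothesis $\nu(H) > 0$ together with the uniqueness of Haar measure on $H$ (which is compact, since $\mu_H$ is normalized to a probability measure). Since $\nu|_H$ is then a nonzero finite left-invariant measure on $H$, uniqueness forces $\nu|_H = c_H\,\mu_H$ for some constant $c_H > 0$. Evaluating on all of $H$ and using $\mu_H(H) = 1$ gives $c_H = \nu(H)$, so that
$$\frac{\dd \mu_H}{\dd \nu}\bigg|_H = \frac{1}{\nu(H)} .$$
In particular $\mu_H \ll \nu$, and $\mu_H$ is precisely the uniform probability measure for $\nu$ on the space $H$ in the sense of Proposition \ref{prop:ent-max}.

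Next I would apply Proposition \ref{prop:ent-max} with the ambient space taken to be $H$ and reference measure $\nu|_H$; this yields $S_\nu(\mu_H, H) = \log \nu(H)$ directly, and the identical argument run on $G$ gives $S_\nu(\mu_G, G) = \log \nu(G)$. The conclusion then follows at once from monotonicity of the logarithm and the hypothesis $\nu(H) \leq \nu(G)$, namely
$$S_\nu(\mu_H, H) = \log \nu(H) \leq \log \nu(G) = S_\nu(\mu_G, G) .$$

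The only genuinely delicate point is the identification $\nu|_H = \nu(H)\,\mu_H$ in the second step: one must confirm that $\nu|_H$ is a bona fide Haar measure on $H$ (a nonzero left-invariant Radon measure) so that uniqueness applies, and this is exactly where the assumption $\nu(H) > 0$ is indispensable. It rules out the degenerate situation in which $H$ is $\nu$-null—for example, a discrete subgroup embedded in a continuous group, whose own Haar (counting) measure is singular with respect to $\nu$ and for which $\mu_H \ll \nu$ would fail. Once that identification is in place, everything reduces to the computation already carried out in Proposition \ref{prop:ent-max}.
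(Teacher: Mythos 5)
Your proposal is correct and follows essentially the same route as the paper: identify $\frac{\dd\mu_H}{\dd\nu} = \frac{1}{\nu(H)}$ (and likewise for $G$), conclude $S_\nu(\mu_H,H) = \log\nu(H)$, and finish by monotonicity of the logarithm. The only difference is that you carefully justify the identification $\nu|_H = \nu(H)\,\mu_H$ via uniqueness of Haar measure and flag the role of $\nu(H)>0$, whereas the paper simply asserts $\dd\mu_H = \frac{1}{\nu(H)}\dd\nu$ and computes the integral directly.
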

\begin{proof}
Since $G, H \subset \mathcal{X}$ and $\mu_G, \mu_H$ and $\nu$ are Haar measures, we have that $\dd \mu_G = \frac{1}{\nu(G)} \dd \nu, \; \dd \mu_H = \frac{1}{\nu(H)} \dd \nu$. 
Then 
\begin{align*}
S_{\nu} (\mu_H,H) &= - \int_{H}  \frac{\dd \mu_H}{\dd \nu}  \log \left(\frac{\dd \mu_H}{\dd \nu} \right) \; \dd \nu \\
&= - \int_{H} \frac{1}{\nu(H)}   \log \left(\frac{1}{\nu(H)}  \right) \; \dd \nu \\
&= \log \nu(H) , 
\end{align*}
and similarly, $S_{\nu} (\mu_G,G) = \log \nu(G)$.
Therefore, 
$$
S_{\nu} (\mu_H,H)= \log \nu(H)  \leq  \log \nu(G) =S_{\nu} (\mu_G,G),
$$
completing the proof.
\end{proof}

We now show that entropy is invariant under translation by a group element $g$ if both the measure and reference measure are invariant under it. 
\begin{prop}
\label{prop:invariant-ent}
Let $G$ be a locally compact topological group with Haar measure $\mu_G$. For any element $g \in G$ and any (not necessarily compact) subset $A \subset G$, let $\rho$ be an information measure. Then
\begin{equation}
S_{\mu_G} (\rho, A) \leq S_{\mu_G} (\mu_G, A) = S_{\mu_G} (\mu_G, g  A) .
\end{equation}
\end{prop}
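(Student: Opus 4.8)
The plan is to deduce both the inequality and the equality from the already-established Proposition \ref{prop:ent-max} together with the defining left-invariance of the Haar measure, so that essentially no new computation is needed.

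For the inequality $S_{\mu_G}(\rho, A) \leq S_{\mu_G}(\mu_G, A)$, I would simply invoke Proposition \ref{prop:ent-max} with reference measure $\nu = \mu_G$, finite measure $\eta = \rho$, and the set $A$ playing the role of the ambient domain. Since $\rho$ is an information measure on $(G, \mu_G)$, we have $\rho \ll \mu_G$ with $\frac{\dd \rho}{\dd \mu_G} \in [0,1]$, so the restriction of $\rho$ to $A$ is a finite measure absolutely continuous with respect to $\mu_G$, and inequality (\ref{eq:prop-1_2}) applies verbatim on $A$.

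For the equality, I would compute each side directly from formula (\ref{eq:fin-rel-def}). Taking $\eta = \nu = \mu_G$ there, the Radon--Nikodym derivative is identically $1$, so its logarithm vanishes and the integral term drops out, leaving $S_{\mu_G}(\mu_G, A) = \log \mu_G(A)$ and, by the same computation, $S_{\mu_G}(\mu_G, gA) = \log \mu_G(gA)$. The conclusion then follows at once from left-invariance of the Haar measure, namely $\mu_G(gA) = \mu_G(A)$ for every $g \in G$, which forces $\log \mu_G(gA) = \log \mu_G(A)$ and hence the claimed equality.

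I do not expect a genuine obstacle here, as the proposition is largely a repackaging of earlier results; the conceptual content is carried entirely by the left-invariance of $\mu_G$ and by the entropy-maximization already proved. The only points demanding care are technical: since $A$ is permitted to be noncompact, one should either assume $\mu_G(A) < \infty$ (so that $\log \mu_G(A)$ is finite and the entropy is well defined) or read the statement in the extended reals, and one should confirm that the entropy functional restricts cleanly to the domain $A$, i.e., that every integral in (\ref{eq:fin-rel-def}) is taken over $A$ rather than over all of $G$. With those conventions fixed, the argument is immediate.
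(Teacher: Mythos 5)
Your proposal is correct and follows essentially the same route as the paper: the paper likewise obtains $S_{\mu_G}(\mu_G, A) = \log\mu_G(A) = \log\mu_G(gA)$ from formula (\ref{eq:fin-rel-def}) and left-invariance, and proves the inequality by the Jensen's-inequality argument of Proposition \ref{prop:ent-max} applied on $A$ (which you simply cite as inequality (\ref{eq:prop-1_2}) rather than re-deriving). If anything, your appeal to the finite-measure version (\ref{eq:prop-1_2}) is slightly cleaner, since $\rho$ need not have unit mass on $A$.
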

\begin{proof}
By definition \ref{eq:fin-rel-def},
$S_{\mu_G} (\mu_G,A) 
= \log \mu_G (A)$.
Because  $\mu_G$ is a Haar measure, we have that $\mu_G (g  A) = \mu_G (A)$, so
\begin{align*}
S_{\mu_G} (\mu_G, g  A) 
= \log \mu_G (g  A) 
= \log \mu_G (A) 
= S_{\mu_G} (\mu_G, A) .
\end{align*}
As in the proof of Proposition \ref{prop:ent-max}, we have by Jensen's inequality, 
\begin{align*}
\nonumber
S_{\mu_G} (\rho,A) &=\int_{A  }  \frac{\dd \rho}{\dd \mu_G}  \log \left(\frac{\dd \mu_G}{\dd \rho} \right) \; \dd \mu_G \\
&\leq  \log \int_A     \frac{\dd \rho}{\dd \mu_G}  \frac{\dd \mu_G}{\dd \rho}   \dd \mu_G   \\
&= \log \mu_G (A) \\
&= S_{\mu_G} (\mu_G, A),
\end{align*}
which completes the proof.
\end{proof}

\begin{cor}
\label{cor:invariant-ent}
Given the setup in Proposition \ref{prop:invariant-ent}, we have that
\begin{equation}
S_{\mu_G} (\rho, g  A) \leq S_{\mu_G} (\mu_G, g  A) = S_{\mu_G} (\mu_G, A) .
\end{equation}
\end{cor}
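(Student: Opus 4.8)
The plan is to derive this essentially for free from Proposition \ref{prop:invariant-ent}, since that proposition was stated for an arbitrary (not necessarily compact) subset $A \subset G$. The key observation is that I may instantiate the proposition at the set $gA$ in place of $A$: because $gA$ is itself just some subset of $G$, the inequality part applies verbatim and yields $S_{\mu_G}(\rho, gA) \leq S_{\mu_G}(\mu_G, gA)$. This handles the inequality in the corollary with no new work.

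For the remaining equality, I would invoke the left-invariance of the Haar measure directly, rather than re-deriving it. Concretely, the equality part of Proposition \ref{prop:invariant-ent} already records that $S_{\mu_G}(\mu_G, A) = S_{\mu_G}(\mu_G, gA)$, which is exactly the equality $S_{\mu_G}(\mu_G, gA) = S_{\mu_G}(\mu_G, A)$ demanded here, read in the opposite order. If one prefers a self-contained line, it unwinds as
\begin{align*}
S_{\mu_G}(\mu_G, gA) = \log \mu_G(gA) = \log \mu_G(A) = S_{\mu_G}(\mu_G, A),
\end{align*}
where the first and last equalities are the closed form $S_{\mu_G}(\mu_G, \cdot) = \log \mu_G(\cdot)$ from \eqref{eq:fin-rel-def}, and the middle equality is translation invariance $\mu_G(gA) = \mu_G(A)$.

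There is no genuine obstacle here: the content is entirely contained in Proposition \ref{prop:invariant-ent}, and the corollary simply reorganizes its conclusion around the translated set $gA$. The only point requiring a moment's care is to confirm that the proposition's hypotheses truly cover $gA$ — which they do, since $A$ there ranges over all subsets of $G$ and $\rho$ is a fixed information measure on $(G, \mu_G)$ — so that applying it to $gA$ is legitimate. Chaining the two displayed facts then gives $S_{\mu_G}(\rho, gA) \leq S_{\mu_G}(\mu_G, gA) = S_{\mu_G}(\mu_G, A)$, which is precisely the claim.
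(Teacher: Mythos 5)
Your proposal is correct and matches the paper's own argument: the paper likewise obtains the inequality by running the Jensen's-inequality step of Proposition \ref{prop:invariant-ent} on the translated set $gA$, and takes the equality $S_{\mu_G}(\mu_G, gA) = S_{\mu_G}(\mu_G, A)$ directly from that proposition. Your observation that one can simply instantiate the proposition at $gA$ (since it was stated for arbitrary subsets) is exactly the intended shortcut.
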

\begin{proof}
As in the proof of Proposition \ref{prop:invariant-ent}, Jensen's inequality gives that $S_{\mu_G} (\rho, g  A) \leq S_{\mu_G} (\mu_G, g  A)$, and by Proposition \ref{prop:invariant-ent}, $S_{\mu_G} (\mu_G, g  A) = S_{\mu_G} (\mu_G, A)$.
\end{proof}

Therefore, while the entropy of a given information measure is not invariant under translations by a group element, it is always bounded above by the entropy of the Haar measure, which is invariant under any group element translations. 
The Haar measure of a topological group thus serves as the uniform measure for that group. 
Moreover, the entropy with respect to the measure that is ``more symmetric" is higher.

We include some examples to illustrate the results Proposition \ref{prop:invariant-ent} and Corollary \ref{cor:invariant-ent}.
\begin{example}
Let $G = (\mathbb{R}, +)$, so the Haar measure is the Lebesgue measure $\nu$, and let $A = [a,b] \subset \mathbb{R}$ (with $b>a$). Then $S_\nu (\nu, A) = \log(b-a)$, 
and for any $g \in \mathbb{R}$, we have $S_\nu (\nu, g+A) = \log(g+b-(g+a)) = \log(b-a) = S_\nu (\nu, A)$.
\end{example}

\begin{example}
Let $H = (\mathbb{R}^+, \times)$,  $B = [a,b] \subset \mathbb{R}^+$ (with $b>a$). 
Then the Haar measure on $H$ is given by $\mu_H(X) = \int_X \frac{1}{x} \dd x$. 
Using formula (\ref{eq:fin-rel-def}), we have 
\begin{align*}
S_{\mu_H} (\mu_H, B)  =  \log \mu_H(B) 
= \log \left(   \int_a^b  \frac{1}{x} \dd x  \right) 
=\log \log(b/a) .
\end{align*}
Then for any $g \in \mathbb{R}$ we have 
\begin{align*}
S_{\mu_H} (\mu_H, gB) &= \log \log \left( \frac{gb}{ga} \right) 
= \log \log(b/a)  
= S_{\mu_H} (\mu_H, B) ,
\end{align*}
as per Proposition \ref{prop:invariant-ent}.
\end{example}

\begin{example}
For comparison, we compute the entropy of $\mu_H$ relative to $\nu$ from the previous two examples.
\begin{align*}
S_\nu (\mu_H, B) &= \int_B \frac{\log x}{x} \dd x\\
&= \frac{1}{2}(\log^2 (b) - \log^2 (a)) \\
&= \frac{1}{2}(\log (b) - \log (a)) (\log (b) + \log (a)) \\
&= \frac{1}{2}\log (b/a) \log (ab)  ,
\end{align*}
which is clearly not invariant under addition or multiplication by an arbitrary group element $h \in \mathbb{R}^+$.

\end{example}


\begin{lem}
\label{lem:change-vars}
{\rm (Change of Reference Measure Formula.)}
For any measures $\mu, \nu$ on a set $X$ and information measure $\rho$ such that $\rho \ll \mu$ and $\rho \ll \nu$, we have, 
\begin{equation}
\label{eq:change-vars}
 S_{\nu} (\rho, X) = S_\mu (\rho, X)  - \frac{1}{\rho(X)} \int_X \log \left(\frac{\dd \mu}{\dd \nu} \right)  \dd \rho .
\end{equation}
\end{lem}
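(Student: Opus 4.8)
The plan is to reduce everything to the single finite-measure entropy formula (\ref{eq:fin-rel-def}) and then invoke the chain rule for Radon--Nikodym derivatives. First I would rewrite both $S_\nu(\rho,X)$ and $S_\mu(\rho,X)$ in a form where the integrals are taken against $\rho$ itself rather than against the reference measures. Starting from (\ref{eq:fin-rel-def}) applied to $\rho$ relative to $\nu$ and folding the density into the measure via $\frac{\dd\rho}{\dd\nu}\,\dd\nu = \dd\rho$, I obtain
\[
S_\nu(\rho, X) = \log\rho(X) - \frac{1}{\rho(X)}\int_X \log\left(\frac{\dd\rho}{\dd\nu}\right)\dd\rho ,
\]
and, identically,
\[
S_\mu(\rho, X) = \log\rho(X) - \frac{1}{\rho(X)}\int_X \log\left(\frac{\dd\rho}{\dd\mu}\right)\dd\rho .
\]

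The key step is the chain rule for densities: since $\rho \ll \mu$ and $\rho \ll \nu$, the Radon--Nikodym derivatives compose to give
\[
\frac{\dd\rho}{\dd\nu} = \frac{\dd\rho}{\dd\mu}\,\frac{\dd\mu}{\dd\nu}
\]
$\nu$-almost everywhere, and hence $\rho$-almost everywhere because $\rho\ll\nu$. Taking logarithms turns this product into a sum,
\[
\log\left(\frac{\dd\rho}{\dd\nu}\right) = \log\left(\frac{\dd\rho}{\dd\mu}\right) + \log\left(\frac{\dd\mu}{\dd\nu}\right) ,
\]
valid $\rho$-a.e. Substituting this into the expression for $S_\nu(\rho,X)$ and splitting the integral, the piece carrying $\log(\dd\rho/\dd\mu)$ recombines with $\log\rho(X)$ into exactly $S_\mu(\rho,X)$, while the leftover piece is precisely $-\frac{1}{\rho(X)}\int_X \log\left(\frac{\dd\mu}{\dd\nu}\right)\dd\rho$, which is the claimed correction term (\ref{eq:change-vars}).

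The main obstacle is the measure-theoretic bookkeeping surrounding the chain rule rather than any substantial computation. Strictly speaking $\frac{\dd\mu}{\dd\nu}$ is only guaranteed to exist when $\mu \ll \nu$, which is not listed among the hypotheses; since this density enters only through an integral against $\dd\rho$, and $\rho \ll \mu$ together with $\rho \ll \nu$, I would instead define it $\rho$-a.e.\ as the ratio $(\dd\rho/\dd\nu)/(\dd\rho/\dd\mu)$ on the set $\{\dd\rho/\dd\mu > 0\}$, which carries all of the mass of $\rho$. I would also remark that $\rho \ll \mu$ and $\rho \ll \nu$ force both $\dd\rho/\dd\mu$ and $\dd\rho/\dd\nu$ to be strictly positive $\rho$-a.e., so the logarithms are finite $\rho$-a.e.\ and no $\log 0$ pathology arises; this is what makes the decomposition of the logarithm and the subsequent splitting of the integral legitimate.
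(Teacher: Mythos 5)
Your proof is correct and follows essentially the same route as the paper's: apply the Radon--Nikodym chain rule, take logarithms, and integrate against $\dd\rho/\rho(X)$ so the terms recombine into the two entropies via (\ref{eq:fin-rel-def}). The only difference is that where the paper defers the issue of $\mu \not\ll \nu$ to a citation of Lemma 7.2 of \cite{AmbGigSav14}, you handle it explicitly by defining $\frac{\dd\mu}{\dd\nu}$ $\rho$-a.e.\ as the ratio $\bigl(\frac{\dd\rho}{\dd\nu}\bigr)/\bigl(\frac{\dd\rho}{\dd\mu}\bigr)$ on $\{\frac{\dd\rho}{\dd\mu}>0\}$, which is exactly the right fix.
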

\begin{proof}
Starting from the chain rule for Radon-Nikodym derivatives, we have that
\begin{align*}
\frac{\dd \rho}{\dd \nu} &= \frac{\dd \rho}{\dd \mu}  \frac{\dd \mu}{\dd \nu} \\
\log \left(\frac{\dd \rho}{\dd \nu} \right)&= \log \left(\frac{\dd \rho}{\dd \mu} \right) + \log  \left(\frac{\dd \mu}{\dd \nu} \right) \\
\log \rho(X) -\frac{1}{\rho(X)} \int_X \log \left( \frac{\dd \rho}{\dd \nu} \right)  \dd \rho &= \log \rho(X)  - \frac{1}{\rho(X)} \int_X  \log \left( \frac{\dd \rho}{\dd \mu}\right) \dd \rho \\
 &\qquad- \frac{1}{\rho(X)} \int_X \log \left( \frac{\dd \mu}{\dd \nu}  \right) \dd \rho ,
\end{align*}
and the result follows by the definition of entropy (\ref{eq:fin-rel-def}). See also the proof of Lemma 7.2 in \cite{AmbGigSav14} for why we do not need to assume that $\mu \ll \nu$ for this to hold.
\end{proof}

\begin{proof}[Proof of Theorem \ref{thm:gen-ineq}.]
By Lemma \ref{lem:change-vars} we have that 
\begin{equation*}
S_{\mu_G} (\rho, A) = S_\xi (\rho, A)  - \frac{1}{\rho(X)} \int_A \log \left(\frac{\dd \xi}{\dd \mu_G} \right)  \dd \rho .
\end{equation*}
Then by Proposition \ref{prop:max-set} and Remark \ref{rem1} we have that $\frac{\dd \xi}{\dd \mu_G} \leq 1$, so 
\begin{equation}
\label{eq:gap}
0 \leq - \frac{1}{\rho(X)} \int_A \log \left( \frac{\dd \xi}{\dd \mu_G} \right) \dd \rho = S_{\mu_G} (\rho, A) - S_\xi (\rho, A),\end{equation}
which proves the first inequality.
Incidentally, formula (\ref{eq:gap}) also gives the exact ``entropic gap" between the reference measures 
$\xi$ and $\mu_G$.
The rest of the results are given by Proposition \ref{prop:invariant-ent} and Corollary  \ref{cor:invariant-ent}. 
\end{proof}

\begin{proof}[Proof of Theorem \ref{thm:rel-sym}.]
Recalling that $H \leq G$, 
we first note that for all $A \subset H$, we have $\mu_H = \mu_G$ by sup-normalization. 
Therefore, $\frac{\dd \xi}{\dd \mu_H} =  \frac{\dd \xi}{\dd \mu_G}$ for all $A \subset H$, so $S_{\mu_G} (\xi, H) =S_{\mu_H} (\xi, H)$.
The result then follows by noting that $S_{\mu_G} (\xi, X)\geq 0$ for all $X \subset G \setminus H$ (or by invoking the monotonicity of entropy, {\it i.e.}, that $S_{\mu_G} (\xi, H)  \leq S_{\mu_G} (\xi, G)$ given that $H \leq G$).  
\end{proof}

\section*{Acknowledgments}
The author would like to thank  Henry Cohn for his mentorship and feedback, and David Ebin for his valuable comments on the draft. 

 \bibliography{refs} 
 \bibliographystyle{alpha}

\end{document}